\newtheorem{theorem}{Theorem}
\newtheorem{lemma}{Lemma}
\newtheorem{proposition}{Proposition}
\newtheorem{corollary}{Corollary}
\newtheorem{remark}{Remark}
\numberwithin{equation}{section}
\title[A sharp estimate for the genus of surfaces]{A sharp estimate for the genus of embedded surfaces in the $3$-sphere}
\subjclass[2010]{Primary 53A10; Secondary 53C45}
\begin{document}
\author{Kwok-Kun Kwong}
\address{
Mathematical Sciences Institute\\
Australian National University\\
ACT 0200, Australia}
\email{kwok-kun.kwong@anu.edu.au}
\maketitle

\begin{abstract}
By refining the volume estimate of Heintze and Karcher \cite{HK}, we obtain a sharp pinching estimate for the genus of a surface in $\mathbb S^{3}$, which involves an integral of the norm of its traceless second fundamental form. More specifically, we show that if $g$ is the genus of a closed orientable surface $\Sigma$ in a $3$-dimensional orientable Riemannian manifold $M$ whose sectional curvature is bounded below by $1$, then
$4 \pi^{2} g(\Sigma) \le 2\left(2 \pi^{2}-|M|\right)+\int_{\Sigma} f(|\stackrel \circ A|)$, where $ \stackrel \circ A $ is the traceless second fundamental form and $f$ is an explicit function. As a result, the space of closed orientable embedded minimal surfaces $\Sigma$ with uniformly bounded $\|A\|_{L^3(\Sigma)}$ is compact in the $C^k$ topology for any $k\ge2$.
\end{abstract}

\section{Introduction}

Suppose $\Sigma$ is an umbilical surface in the sphere $\mathbb S^{3}$, i.e. the difference of the principal curvatures $k_2-k_1\equiv 0$, then $\Sigma$ is a geodesic sphere. Our result shows that in a $3$-manifold with sectional curvature bounded below by $1$, if the two principal curvatures $k_1$, $k_2$ are close enough in an integral sense, then it is homeomorphic to a sphere. More generally, we can show that if $M$ is a closed Riemannian $3$-manifold whose sectional curvature is bounded below by $1$, then a certain integral involving the gap between the two principal curvatures of a closed surface controls the genus of $\Sigma$. See Theorem \ref{thm2}.

For example, as a corollary of our result, we can give a sharp bound of the genus of a surface in $\mathbb S^{3}$:
\begin{theorem} [Corollary \ref{cor1}]
For any closed embedded surface $\Sigma$ of genus $g$ in $\mathbb S^{3}$,
\begin{equation*}
4 \pi^{2} g(\Sigma) \le \int_{\Sigma}f\left(|\stackrel \circ A|\right).
\end{equation*}
Here $\stackrel \circ A$ is the traceless second fundamental form of $\Sigma$ and $f(t)=\sqrt{2} t+\left(t^{2}-2\right) \tan^{-1}\left(\frac{t}{\sqrt{2}}\right)$.

The equality holds if and only if $\Sigma$ is a geodesic sphere or a Clifford torus.
\end{theorem}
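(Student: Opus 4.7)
The plan is to derive this corollary as the direct specialization of Theorem \ref{thm2} to the ambient manifold $M = \mathbb{S}^{3}$, and then to verify the equality characterization separately.

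First, I would observe that $\mathbb{S}^{3}$ has constant sectional curvature $1$ and volume $|\mathbb{S}^{3}| = 2\pi^{2}$, so the hypothesis of Theorem \ref{thm2} holds (with equality pointwise) and the correction term $2(2\pi^{2} - |M|)$ vanishes identically. The general inequality of Theorem \ref{thm2} then collapses to the asserted bound
\begin{equation*}
4\pi^{2} g(\Sigma) \le \int_{\Sigma} f(|\stackrel{\circ}{A}|).
\end{equation*}

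For the equality statement, sufficiency is a short direct calculation on the two model surfaces. A geodesic $2$-sphere is totally umbilical, so $|\stackrel{\circ}{A}| \equiv 0$; since $f(0) = 0$ and $g = 0$, both sides vanish. The Clifford torus has principal curvatures $\pm 1$, hence $|\stackrel{\circ}{A}|^{2} \equiv 2$ and $f(\sqrt{2}) = \sqrt{2}\cdot\sqrt{2} + 0 = 2$; combined with its area $2\pi^{2}$, the right-hand side equals $4\pi^{2} = 4\pi^{2}\,g(\Sigma)$.

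For necessity, the strategy is to trace back through the derivation of Theorem \ref{thm2} and impose equality at every step. Because $\mathbb{S}^{3}$ already saturates the sectional curvature hypothesis, the refined Heintze-Karcher volume comparison is automatically an equality, so any remaining slack must come from the pointwise algebraic inequality encoded in the definition of $f$. I expect this to force $|\stackrel{\circ}{A}|$ to be constant on $\Sigma$, equal to either $0$ or $\sqrt{2}$, together with rigidity along the ambient normal geodesics coming from the saturated Heintze-Karcher identity. The main obstacle will be the final classification step: extracting enough geometric information from this pointwise data, together with the closedness and embeddedness of $\Sigma$ in $\mathbb{S}^{3}$, to conclude that $\Sigma$ must in fact be a geodesic sphere in the first case and a Clifford torus in the second, rather than some other surface sharing the same curvature profile.
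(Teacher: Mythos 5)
The derivation of the inequality (specializing Theorem \ref{thm2} to $M=\mathbb{S}^3$ so that $2(2\pi^2-|M|)=0$) and the verification that geodesic spheres and the Clifford torus attain equality are both correct and match the paper. The problems are in the necessity direction.

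First, your claim that ``because $\mathbb{S}^3$ already saturates the sectional curvature hypothesis, the refined Heintze--Karcher volume comparison is automatically an equality'' is not correct. In the proof of Proposition \ref{prop 1} there are two inequalities in the volume estimate: the Jacobi-field comparison is indeed an equality in $\mathbb{S}^3$, but the subsequent step replaces the cut distance $c(p)$ by the focal length $\cot^{-1}(k_2)$, and the cut distance can be strictly smaller. So equality in the theorem forces the cut distance from \emph{both} sides of $\Sigma$ to coincide with the focal length at every point; this rigidity is used crucially later. Second, equality in Lemma \ref{lem3} holds if and only if $k_1=\pm k_2$ pointwise, i.e.\ each point is either umbilical ($k_1=k_2$) or a minimal point ($k_1+k_2=0$). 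This does \emph{not} force $|\stackrel{\circ}{A}|$ to be constant and equal to $0$ or $\sqrt{2}$: a minimal point can have $|\stackrel{\circ}{A}|=\sqrt{2}\,|k_2|$ for any value of $k_2$. Your expectation here is false, and building the classification on it would fail.

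Third, you explicitly leave the classification open, and that is where the real content lies. The paper's argument is: from the pointwise dichotomy, if there is an umbilical point with $k_i\ne 0$, then by continuity the surface is umbilical in a neighborhood; the Codazzi/Schur argument makes the principal curvatures locally constant and nonzero, and a connectedness argument then shows $\Sigma$ is a geodesic sphere. Otherwise $k_1+k_2\equiv 0$, so $\Sigma$ is minimal. If some point has $k_1=k_2=0$, the cut-distance-equals-focal-length rigidity (from the first point above) shows the focal distance is $\pi/2$, and Kasue's theorem forces $\Omega_1$ to be a hemisphere, so $\Sigma$ is an equator. If no such point exists, $k_2>0$ everywhere; Lawson's result then shows $\Sigma$ is a torus, and Brendle's resolution of the Lawson conjecture identifies it as the Clifford torus. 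None of these ingredients appear in your sketch, so the proposal as written does not yield the equality characterization.
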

As a result of this pinching estimate, by using a compactness result of Choi-Schoen \cite{choi1985space}, we can prove the following compactness result:
\begin{corollary}[Corollary \ref{cpt}]
Let $M$ be a closed orientable three-dimensional Riemannian manifold whose sectional curvature of $M$ is bounded from below by $1 $ and $C\ge 0$.
Then the space of closed orientable embedded minimal surfaces $\Sigma$ with $ \|A\|_{L^3(\Sigma)} \le C$ is compact in the $C^k$ topology for any $k\ge2$.
\end{corollary}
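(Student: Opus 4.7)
The plan is to use Theorem~\ref{thm2} to convert the $L^{3}$ bound on $A$ into a uniform bound on the genus, and then to invoke the Choi--Schoen compactness theorem \cite{choi1985space}. Since $\Sigma$ is minimal, $\stackrel{\circ}{A}=A$, so Theorem~\ref{thm2} applied to $\Sigma$ reads
\[
4\pi^{2}g(\Sigma)\le 2(2\pi^{2}-|M|)+\int_{\Sigma}f(|A|).
\]
The task therefore reduces to controlling the right-hand side purely in terms of $C$ and $M$.

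My first step will be the elementary growth bound $f(t)\le C_{0}t^{2}$ for all $t\ge 0$ and some absolute constant $C_{0}$. Indeed, $f(t)\sim(\pi/2)t^{2}$ as $t\to\infty$, while the linear terms in the Taylor expansion of $f$ at $0$ cancel, giving $f(t)=O(t^{3})$ as $t\to 0^{+}$; hence $f(t)/t^{2}$ is continuous and bounded on $(0,\infty)$. This converts the genus estimate into
\[
4\pi^{2}g(\Sigma)\le 2(2\pi^{2}-|M|)+C_{0}\int_{\Sigma}|A|^{2}.
\]

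Next I will bound $|\Sigma|$ and $\int_{\Sigma}|A|^{2}$ via Gauss--Bonnet. For a minimal surface the Gauss equation gives $K_{\Sigma}=K_{M}(T_{p}\Sigma)-\tfrac{1}{2}|A|^{2}$, and combined with $K_{M}\ge 1$ this yields
\[
|\Sigma|\le \int_{\Sigma}K_{M}(T_{p}\Sigma)=4\pi(1-g(\Sigma))+\tfrac{1}{2}\int_{\Sigma}|A|^{2}\le 4\pi+\tfrac{1}{2}\int_{\Sigma}|A|^{2}.
\]
Applying H\"older, $\int_{\Sigma}|A|^{2}\le \|A\|_{L^{3}}^{2}\,|\Sigma|^{1/3}\le C^{2}|\Sigma|^{1/3}$, so $|\Sigma|\le 4\pi+\tfrac{1}{2}C^{2}|\Sigma|^{1/3}$, which gives a uniform area bound $|\Sigma|\le A_{0}(C)$. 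Substituting back yields $\int_{\Sigma}|A|^{2}\le C^{2}A_{0}(C)^{1/3}$ and therefore a uniform bound $g(\Sigma)\le G_{0}(C,M)$.

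Finally, in dimension $3$ the hypothesis on sectional curvature forces $\mathrm{Ric}_{M}\ge 2>0$, so the Choi--Schoen theorem applies in each fixed genus, giving $C^{k}$-compactness of the space of closed orientable embedded minimal surfaces in $M$ of that genus. Since the admissible genera lie in the finite set $\{0,1,\ldots,G_{0}\}$, the full space is a finite union of $C^{k}$-compact sets and is hence $C^{k}$-compact. I expect the main obstacle to be the apparent circularity between the area bound, $\int_{\Sigma}|A|^{2}$, and the genus (each is controlled using the others), which is resolved here by the polynomial inequality $|\Sigma|\le 4\pi+\tfrac{1}{2}C^{2}|\Sigma|^{1/3}$ arising from H\"older; once the growth bound on $f$ is in hand, everything else is a packaging of standard tools.
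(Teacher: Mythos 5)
Your argument is correct, but it takes a noticeably more circuitous route than the paper's, because you chose the quadratic growth bound $f(t)\le C_0 t^2$ rather than the cubic one. The paper's Lemma~\ref{lem4} (and its packaging in Corollary~\ref{cor1'}) gives the global estimate $f(t)\le \tfrac{2\sqrt{2}}{3}t^3$; applied with $|A|=|\stackrel{\circ}{A}|$ on a minimal surface this yields immediately
\[
2\pi^2 g(\Sigma)\le 2\pi^2-|M|+\tfrac{\sqrt{2}}{3}\int_\Sigma |A|^3\le 2\pi^2-|M|+\tfrac{\sqrt{2}}{3}C^3,
\]
so the genus bound falls out with no further work. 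You instead get $f(t)\lesssim t^2$ (correct, since $f(t)/t^2\to 0$ at $0$ and $\to\pi/2$ at $\infty$), which forces you to control $\int_\Sigma|A|^2$ by $\|A\|_{L^3}$ alone; this requires the auxiliary chain Gauss--Bonnet $\Rightarrow$ area bound $\Rightarrow$ H\"older $\Rightarrow$ bootstrap. That chain is valid (the polynomial inequality $|\Sigma|\le 4\pi+\tfrac12 C^2|\Sigma|^{1/3}$ does close the apparent circularity you flagged), but it is extra machinery that the cubic bound renders unnecessary --- and you in fact already observed that $f(t)=O(t^3)$ near $0$, which is the hint that an $L^3$ hypothesis pairs naturally with a $t^3$ bound. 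One small point glossed over at the end: a finite union of $C^k$-compact genus strata only shows your set sits inside a compact set; you still need to note, as the paper does, that the set $\{\Sigma:\|A\|_{L^3(\Sigma)}\le C\}$ is closed under $C^k$ ($k\ge2$) convergence (since $A$ and the area element converge), so that it is a closed subset of a compact set and hence compact.
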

As another corollary, we can prove the following $L^3$ gap theorem for minimal surfaces in $\mathbb S^3$.
\begin{corollary}[Corollary \ref{gap}]
A closed embedded minimal surface $\Sigma$ in $\mathbb S^{3}$ with $\int_\Sigma |A|^3<3\sqrt{2}\pi^2$ is an equator.
\end{corollary}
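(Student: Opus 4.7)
The plan is to feed the $L^3$ bound into the genus estimate of Corollary~\ref{cor1} and show that it forces $g(\Sigma)<1$. Since $\Sigma$ is minimal, $H\equiv 0$ and hence $|\stackrel \circ A|=|A|$, so Corollary~\ref{cor1} reads
\[
4\pi^2 g(\Sigma)\;\le\;\int_\Sigma f(|A|),\qquad f(t)=\sqrt 2\,t+(t^2-2)\arctan(t/\sqrt 2).
\]

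The crux of the argument is the pointwise inequality
\[
f(t)\;\le\;\tfrac{2\sqrt 2}{3}\,t^3, \qquad t\ge 0,
\]
which is sharp at $t=0$: a Taylor expansion of $\arctan$ about the origin gives $f(t)=\tfrac{2\sqrt 2}{3}\,t^3-\tfrac{2\sqrt 2}{15}\,t^5+O(t^7)$, so no smaller coefficient can work. I would prove the inequality by setting $G(t):=\tfrac{2\sqrt 2}{3}\,t^3-f(t)$ and checking $G(0)=G'(0)=G''(0)=0$ directly from the definition of $f$, then verifying that $G'''(t)\ge 0$ on $[0,\infty)$ by a routine differentiation (the third derivative simplifies to an expression of the form $c\,t^2 P(t)/(t^2+2)^3$ with $P\ge 0$). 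Three successive integrations from $0$ then yield $G\ge 0$. This pointwise bound is the only non-trivial ingredient.

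Granting it, the hypothesis $\int_\Sigma|A|^3<3\sqrt 2\,\pi^2$ yields
\[
\int_\Sigma f(|A|)\;\le\;\tfrac{2\sqrt 2}{3}\int_\Sigma|A|^3\;<\;\tfrac{2\sqrt 2}{3}\cdot 3\sqrt 2\,\pi^2\;=\;4\pi^2,
\]
so $g(\Sigma)<1$ and hence $g(\Sigma)=0$. Therefore $\Sigma$ is a closed minimal $2$-sphere in $\mathbb S^3$, which by the classical theorem of Almgren is necessarily totally geodesic; in other words, $\Sigma$ is an equator.
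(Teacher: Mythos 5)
Your proof is correct and follows essentially the same route as the paper: combine the genus bound from Corollary~\ref{cor1} (specialized to a minimal surface, where $|\stackrel{\circ}{A}|=|A|$) with the pointwise inequality $f(t)\le \tfrac{2\sqrt 2}{3}t^3$, which is exactly the paper's Lemma~\ref{lem4}, deduce $g(\Sigma)=0$, and appeal to Almgren's theorem to conclude that a minimal $2$-sphere in $\mathbb S^3$ is an equator. The only (inessential) variation is your proof of the pointwise bound: you differentiate three times and check sign of $G'''$, while the paper differentiates once and applies the elementary estimate $\tan^{-1}(t/\sqrt 2)\le t/\sqrt 2$, which is slightly shorter but yields the same conclusion.
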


There is a number of existing results of pinching estimates and rigidity theorems for surfaces in $\mathbb R^3$ or $\mathbb R^n$ in the literature. For example, Pogorelov \cite[p. 493]{PAV} proved that if $\Sigma$ is a closed convex surface and the ratio of its principal curvatures are uniformly close to $1$, then $\Sigma$ is close to a round sphere. In \cite{vodopyanov1970estimates}, a similar result is proved by replacing the pointwise condition by some integral condition. In \cite{de2005optimal, de2006ac}, the authors proved that if the $L^2 $ norm of the traceless second fundamental form is small enough, then the surface is $W^{2, 2}$ or $C^0$ close to a sphere. See also \cite[Ch. 6]{reshetnyak2013stability} for a survey on this matter.

There are works on the rigidity theorems in other space forms as well. Most noticably, there are pioneering rigidity theorems for minimal submanifolds in a sphere, due to Simons \cite{simons1968minimal}, Lawson \cite{Lawson1970complete} and Chern-do Carmo-Kobayashi \cite{chern1970minimal}, in which rigidity for oriented closed minimal submanifolds in $\mathbb S^{n+p}$ with $ |A|^2\le \frac{n}{2-\frac{1}{p}}$ is obtained. For pinching estimates, in \cite[Theorem 1.3]{cheng2012rigidity} (cf. also \cite{Perez}), for a closed hypersurface in the standard space forms, a pinching result of the variance of the mean curvature $H$ is proved, which involves a bound on the $L^2$ norm of the traceless second fundamental form. On the other hand, this estimate is modelled on the umbilical hypersurfaces (i.e. geodesic spheres) and no topological or geometric information is known if the $L^2$ norm of the traceless second fundamental form is large, whereas our result can be used to give some topological information about the surface given a bound on a certain integral of the norm of the traceless second fundamental form.

The organization of the remaining sections in this paper is as follows. In Section \ref{sec 2}, we present a proof of an estimate for the genus of a surface embedded in a three-manifold with curvature bounded below by $1$.
Section \ref{sec 3} establishes the connection between this estimate and the norm of the traceless second fundamental form, establishing the main result. Furthermore, we explore some applications, including a compactness result regarding the space of minimal surfaces in $\mathbb S^3$.
In the Appendix, Section \ref{sec 4 appendix}, we conduct a comparative analysis between our estimate and the classical estimate involving the area and the $L^2$ norm of the traceless second fundamental form. By providing numerical examples, we demonstrate that our estimate cannot be derived from the classical estimate and, in fact, performs consistently better in all the computed cases.

\section{An estimate of the genus}\label{sec 2}

To prove the main result, we will need the following proposition, which results from a refinement of a volume estimate by Heintze and Karcher.

\begin{proposition}\label{prop 1}
Let $\Sigma$ be a closed orientable embedded surface in a closed orientable three-dimensional Riemannian manifold $M$. Assume the sectional curvature of $M$ is bounded from below by $1$.
Let $k_1\le k_2$ be the principal curvatures of $\Sigma$ (w.r.t. some unit normal), then
\begin{align*}
4 \pi^{2} g(\Sigma)\le 2\left(2 \pi^{2}-|M|\right) +\int_{\Sigma}\left(k_{2}-k_{1}-(1+k_1k_2)\left(\tan^{-1} k_{2}-\tan^{-1} k_{1}\right)\right).
\end{align*}

Here $g(\Sigma)$ is the genus of $\Sigma$.
\end{proposition}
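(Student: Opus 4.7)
The plan is to bound $|M|$ from above by a Heintze--Karcher style volume estimate obtained from the normal exponential map of $\Sigma$, and then convert the resulting integrand into a genus bound via the Gauss equation and Gauss--Bonnet. Since $\Sigma$ is orientable and embedded in the orientable $M$, it is two-sided; fix a unit normal $\nu$ and let $\Phi^{\pm}\colon \Sigma\times[0,\infty)\to M$, $\Phi^{\pm}(x,t)=\exp_x(\pm t\nu(x))$. For every $p\in M$ there is a minimizing (hence focal-point-free) geodesic from $\Sigma$ to $p$, so $p$ lies in the image of $\Phi^{+}$ or $\Phi^{-}$ at a parameter time no larger than the first focal time along that direction. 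Consequently the images of $\Phi^{\pm}$, restricted up to the respective first focal times, cover $M$, regardless of whether $\Sigma$ separates; any overlap (from non-separation or from the cut time being strictly less than the focal time) only strengthens the inequality below.

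Under $\mathrm{sec}_M\ge 1$, the standard Rauch--Heintze--Karcher Jacobi-field comparison supplies the pointwise Jacobian bound
\[
\bigl|\det D\Phi^{+}(x,t)\bigr|\le J^{+}(x,t):=(\cos t-k_1\sin t)(\cos t-k_2\sin t),\qquad 0\le t\le T_{+}(x):=\tfrac{\pi}{2}-\tan^{-1}k_2(x),
\]
where $T_{+}$ is the first focal time on the $+\nu$ side (the factor with $k_2$ vanishes first because $k_1\le k_2$); the analogous bound with $J^{-}(x,t)=(\cos t+k_1\sin t)(\cos t+k_2\sin t)$ holds on the other side up to $T_{-}(x):=\tfrac{\pi}{2}+\tan^{-1}k_1(x)$. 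Expanding each integrand as a Fourier polynomial in $2t$ and integrating to the respective focal time, the antiderivatives telescope (the $(1+k_2^2)$ in the denominator cancels with the numerator) to give
\[
\int_0^{T_{+}}\!J^{+}\,dt=\tfrac{1+k_1k_2}{2}\,T_{+}-\tfrac{k_1}{2},\qquad \int_0^{T_{-}}\!J^{-}\,dt=\tfrac{1+k_1k_2}{2}\,T_{-}+\tfrac{k_2}{2}.
\]
Adding the two contributions, noting $T_{+}+T_{-}=\pi-(\tan^{-1}k_2-\tan^{-1}k_1)$, and applying the area formula produces
\[
2|M|\le \int_{\Sigma}\Bigl[\pi(1+k_1k_2)+(k_2-k_1)-(1+k_1k_2)(\tan^{-1}k_2-\tan^{-1}k_1)\Bigr]\,dA.
\]

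To finish, the Gauss equation together with $\mathrm{sec}_M\ge 1$ gives $K_{\Sigma}\ge 1+k_1k_2$ pointwise, so Gauss--Bonnet yields $\pi\int_{\Sigma}(1+k_1k_2)\,dA\le 2\pi^2\chi(\Sigma)=4\pi^2(1-g)$. Inserting this bound into the previous display and rearranging gives exactly $4\pi^2 g\le 2(2\pi^2-|M|)+\int_{\Sigma}\bigl[(k_2-k_1)-(1+k_1k_2)(\tan^{-1}k_2-\tan^{-1}k_1)\bigr]\,dA$. The step I expect to be most delicate is the Heintze--Karcher stage: correctly identifying the focal times and signs on the two sides, and justifying $|M|\le\int_{\Sigma}\!\int_0^{T_{+}}\!J^{+}+\int_{\Sigma}\!\int_0^{T_{-}}\!J^{-}$ when $\Sigma$ is non-separating (handled by the minimizing-geodesic covering argument above). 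The trigonometric antiderivative is routine but must be carried out carefully enough that the $k_i$-dependent remainders telescope against each other to leave only $\tfrac{k_2-k_1}{2}$ after summation.
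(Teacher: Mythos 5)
Your argument is correct and reaches the same inequality, but it handles the crucial "$2|M|\le\cdots$" step differently from the paper. The paper first establishes (Lemma \ref{lem sep}, the Choi--Wang separation argument via Poincar\'e--Alexander--Lefschetz duality) that $\Sigma$ separates $M$ into two domains $\Omega_1,\Omega_2$, then applies the Heintze--Karcher inequality (\cite[Cor.~3.3.1]{HK}) to each domain and adds $|\Omega_1|+|\Omega_2|=|M|$. You instead bypass the separation lemma entirely: since every $p\in M$ is joined to $\Sigma$ by a minimizing normal geodesic, which is focal-point-free and therefore has length bounded by the first model focal time $T_\pm(x)=\cot^{-1}$ of the largest principal curvature on the relevant side, the restricted normal exponential maps $\Phi^\pm$ cover $M$, and the area formula together with $J^\pm\ge0$ up to $T_\pm$ gives $|M|\le\int_\Sigma\int_0^{T_+}J^+ + \int_\Sigma\int_0^{T_-}J^-$ regardless of whether $\Sigma$ separates. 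The trigonometric antiderivatives you write out, the identity $T_++T_-=\pi-(\tan^{-1}k_2-\tan^{-1}k_1)$, the Gauss-equation bound $1+k_1k_2\le K_\Sigma$, and the Gauss--Bonnet step are then identical to the paper's. Your route buys independence from the topological separation lemma (and so from the exact-sequence/duality machinery), at the cost of invoking the covering version of the Heintze--Karcher argument directly; the paper's route, having separation available, can quote the boundary-domain form of \cite{HK} verbatim. Both are sound; yours is arguably the cleaner self-contained presentation of the volume estimate.
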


The following lemma is sketched in \cite{choi1983first} and is well-known to experts. For reader's convenience, we provide slightly more details here.
\begin{lemma}\label{lem sep}
With the assumption in Proposition \ref{prop 1}, $\Sigma$ separates $M$ into two components.
\end{lemma}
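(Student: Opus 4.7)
The plan is to reduce the lemma to two ingredients: (i) under the curvature hypothesis $H_2(M;\mathbb{Z}) = 0$; and (ii) a closed two-sided orientable hypersurface in an orientable manifold separates the ambient manifold if and only if its fundamental class in codimension-one homology vanishes. The lemma then follows by combining them.

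For (i), the assumption $\mathrm{sec}_M \geq 1$ on a $3$-manifold gives $\mathrm{Ric}_M \geq 2 g_M$, so by Myers' theorem $\pi_1(M)$ is finite and therefore $H_1(M;\mathbb{Z})$, its abelianization, is a finite abelian group. Since $M$ is closed and orientable, Poincar\'e duality combined with the universal coefficient theorem yields
\[
H_2(M;\mathbb{Z}) \;\cong\; H^1(M;\mathbb{Z}) \;\cong\; \mathrm{Hom}(H_1(M;\mathbb{Z}),\mathbb{Z}) \;=\; 0,
\]
the last equality holding because $H_1(M;\mathbb{Z})$ is finite.

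For (ii), the orientability of both $\Sigma$ and $M$ forces the normal bundle of $\Sigma$ to be trivial, so $\Sigma$ is two-sided and carries a fundamental class $[\Sigma] \in H_2(M;\mathbb{Z})$; by (i) this class is zero. If a connected component of $\Sigma$ did not separate $M$, one could produce a loop $\gamma$ meeting $\Sigma$ transversally in a single point---by joining the two sides of a small transverse arc through a curve in $M \setminus \Sigma$---so that the intersection number $[\gamma] \cdot [\Sigma] = \pm 1$, contradicting $[\Sigma] = 0$ via the nondegeneracy of the Poincar\'e intersection pairing on $M$. Hence $\Sigma$ separates $M$ into two components.

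No step here is analytically difficult; the main content is packaged into Myers' theorem, while the rest is purely homological. The only place where one has to be careful is the separation-versus-vanishing dictionary in (ii), but if a more formal argument is preferred it can be obtained by a direct Mayer--Vietoris computation on a tubular neighborhood $\Sigma \times (-\varepsilon,\varepsilon)$ and its complement $M \setminus \Sigma$. I therefore do not anticipate any serious obstacle beyond keeping the orientation and two-sidedness conventions straight.
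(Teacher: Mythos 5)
Your proposal is correct and reaches the same conclusion, but by a genuinely different route than the paper. The paper first observes that positive Ricci forces $H_1(M;\mathbb Z)$ to have rank zero, then runs the long exact cohomology sequence of the pair $(M,\Sigma)$ (using that $H^2(M;\mathbb Z)$ is all torsion to kill the restriction map to $H^2(\Sigma;\mathbb Z)\cong\mathbb Z$) to deduce $H^3(M,\Sigma;\mathbb Z)\cong\mathbb Z\oplus\mathbb Z$, and finally invokes Poincar\'e--Alexander--Lefschetz duality to identify this with $H_0(M\setminus\Sigma)$. You instead prove the stronger vanishing $H_2(M;\mathbb Z)=0$ (via Myers plus Poincar\'e duality and universal coefficients), note that orientability of $M$ and $\Sigma$ makes $\Sigma$ two-sided with a well-defined class $[\Sigma]\in H_2(M;\mathbb Z)=0$, and then use the standard intersection-number argument: a non-separating two-sided hypersurface admits a loop meeting it transversally once, giving a nonzero intersection pairing and a contradiction. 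Your route is shorter, more geometric, and avoids Alexander duality; what it implicitly relies on, and what the paper's computation of $H_0(M\setminus\Sigma)\cong\mathbb Z\oplus\mathbb Z$ delivers directly, is the final bookkeeping that a \emph{connected} two-sided separating hypersurface yields \emph{exactly} two complementary components (a tubular-neighborhood argument, which you gesture at but should state, since the lemma asserts ``two components,'' not merely ``disconnects''). With that small point made explicit, your argument is a clean alternative to the paper's.
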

\begin{proof}
This is equivalent to showing that $H_{0}(M \backslash \Sigma) \cong \mathbb Z \oplus \mathbb Z$.

As the Ricci curvature of $M$ is positive, its first Betti number $b_1(M)$ is zero. By deRham's theorem, $H^1(M, \mathbb R)=0$, and hence by universal coefficient theorem, $\mathrm{rank}(H_1(M))=0$ as a finitely generated Abelian group.
By Poincare duality, the cohomology group $H^{2}(M ; \mathbb Z)$ has rank zero. Since $M$ is compact, the cohomology groups are all finitely generated Abelian groups and consequently we have that every element of $H^{2}(M ; \mathbb Z)$ is torsion.
Consider the exact sequence (\cite[p. 200]{hatcher2005algebraic})
$$
\begin{tikzcd}
& & {\mathbb Z} \arrow[d, equal] & &
\\
{}&H^{2}(M ; \mathbb Z)\arrow[r, "\phi"] &H^{2}(\Sigma ; \mathbb Z)\arrow[r, "f"] & H^{3}(M, \Sigma ; \mathbb Z) \\
\arrow[r, "g"] & H^{3}(M ; \mathbb Z) \arrow[r]&H^{3}(\Sigma ; \mathbb Z)&{}\\
& {\mathbb Z} \arrow[u, equal] &0\arrow[u, equal]& &
\end{tikzcd}
$$

Since $\mathbb Z$ has no non-zero torsion element, we have $\mathrm{im}(\phi)=0$. Using the first isomorphism theorem and exactness we deduce that
$\mathrm{ker}(g)=\mathrm{im}(f) \cong \mathbb Z / \mathrm{ker}(f) =\mathbb Z / \mathrm{im}(\phi)\cong \mathbb Z$.
Applying the first isomorphism theorem to $g$, we get the isomorphism
$H^{3}(M, \Sigma; \mathbb Z) / \mathbb Z \cong \mathrm{im}(g)\cong \mathbb Z$,
from which it follows that $H^{3}(M, \Sigma ; \mathbb Z) \cong \mathbb Z \oplus \mathbb Z$. By Poincare-Alexander-Lefschetz duality (\cite[Theorem 8.3]{bredon2013topology}), we then get the isomorphism
$$ H_{0}(M \backslash \Sigma) \cong H^{3}(M, \Sigma ; \mathbb Z) \cong \mathbb Z \oplus \mathbb Z. $$
\end{proof}

\begin{proof}[Proof of Proposition \ref{prop 1}]
By Lemma \ref{lem sep}, $\Sigma$ separates $M$ into two components, $\Omega_1$ and $\Omega_2$.

It is clear that our inequality does not depend on the choice of the normal, so we can without loss of generality assume $\{k_i\}_{i=1}^{2}$ (with $k_1\le k_2$) are the principal curvatures of $\Sigma$ w.r.t. the unit outward normal of $\Omega_1$. By \cite[Corollary 3.3.1]{HK} (cf. also \cite[(10''), (11'')]{montiel1991compact}), the volume of $\Omega_1$ is bounded from above:
\begin{equation*}\label{ineq1'}
\begin{split}
|\Omega_1|
\le& \int_{\Sigma}\int_{0}^{c(p)}(\cos t - k_1\sin t)(\cos t - k_2\sin t) dt dS(p),
\end{split}
\end{equation*}
where $c(p)$ is the cut distance of $\Sigma$ w.r.t. $\Omega_1$. In \cite{HK}, the integrand is estimated by using the AM-GM inequality. In order to obtain a more precise estimate, we keep the integrand unchanged.
The cut distance at each point is bounded by above by the focal length, which is not longer than $\cot^{-1}(k_2)$ by Jacobi field comparison, as we have a lower bound on the sectional curvature. So direct integration gives
\begin{equation*}\label{ineq1}
\begin{split}
|\Omega_1|
\le& \int_{\Sigma}\int_{0}^{\cot^{-1}(k_2)}(\cos t - k_1\sin t)(\cos t - k_2\sin t) dt dS\\
=& \frac{1}{2}\int_{\Sigma}(-k_1+(1+k_1k_2)\cot^{-1}(k_2)).
\end{split}
\end{equation*}
Similarly, with respect to the outward unit normal of $\Omega_2$, the principal curvatures are $-k_2\le -k_1$, so we have
\begin{align*}\label{ineq2}
|\Omega_2| \le\frac{1}{2} \int_{\Sigma}(k_2+ (1+k_1k_2)\cot^{-1}(-k_1)).
\end{align*}
By adding the two inequalities and using the identity $\tan^{-1} x=\frac{\pi}{2}-\cot^{-1} x$,
\begin{equation}\label{sum}
\begin{split}
2|M| \le&\int_{\Sigma}(k_2-k_1+(1+k_1k_2)(\cot^{-1}(k_2)+\cot^{-1}(-k_1)))\\
=&\int_{\Sigma}\left(k_2-k_1+(1+k_1k_2)\left(\pi-\left(\tan^{-1}(k_2)-\tan^{-1}(k_1)\right) \right)\right)\\
\le&\int_{\Sigma}\left(k_2-k_1+ \pi K-(1+k_1k_2)\left(\tan^{-1}(k_2)-\tan^{-1}(k_1)\right) \right).
\end{split}
\end{equation}
By the Gauss-Bonnet theorem, the result follows. Note also that $|M|\le |\mathbb S^3|=2\pi^2$ by the volume comparison theorem.
\end{proof}

\section{Applications}\label{sec 3}

Our aim is to draw some conclusions from Proposition \ref{prop 1}.

\begin{lemma}\label{lem3}
For any $k_2\ge k_1$,
\begin{equation*}
-\left(1+k_{1} k_{2}\right)\left(\tan^{-1} k_{2}-\tan^{-1} k_{1}\right) \le 2\left(-1+\left(\frac{k_{2}-k_{1}}{2}\right)^{2}\right) \tan^{-1}\left(\frac{k_{2}-k_{1}}{2}\right).
\end{equation*}
The equality holds if and only if $k_1=\pm k_2$.
\end{lemma}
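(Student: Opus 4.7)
The plan is to reduce this two-variable inequality to a one-variable monotonicity question via a symmetric change of variables. Set $a = (k_1+k_2)/2$ and $b = (k_2-k_1)/2 \ge 0$, so $k_1 = a-b$, $k_2 = a+b$, and $1+k_1 k_2 = 1+a^2-b^2$. The inequality becomes
\[
G(a,b) := (1+a^2-b^2)\bigl(\tan^{-1}(a+b)-\tan^{-1}(a-b)\bigr) \ge 2(1-b^2)\tan^{-1}b,
\]
and one checks directly that the right-hand side is exactly $G(0,b)$. Since $G(-a,b) = G(a,b)$ (the substitution $a \mapsto -a$ corresponds to $(k_1,k_2) \mapsto (-k_2,-k_1)$, which preserves both sides of the original inequality), it suffices to show that $a \mapsto G(a,b)$ is non-decreasing on $[0,\infty)$ for each fixed $b \ge 0$.

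Differentiating in $a$ and factoring out $2a$ gives $\partial_a G = 2a\,B(a,b)$, where
\[
B(a,b) = \Theta(a,b) - \frac{2bD}{N}, \quad D := 1+a^2-b^2, \quad N := (1+(a+b)^2)(1+(a-b)^2),
\]
and $\Theta(a,b) := \tan^{-1}(a+b)-\tan^{-1}(a-b)$. The key algebraic observation is the identity $N = D^2 + 4b^2$, which makes $\frac{2bD}{N}$ a function of the single variable $u := 2b/D$, namely $\frac{u}{1+u^2}$, in the regime $D>0$. The remaining task is to verify $B(a,b) \ge 0$.

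I would then case-split on the sign of $D$. When $D>0$, the tangent subtraction formula gives $\Theta = \tan^{-1}u$, so $B\ge 0$ collapses to the elementary estimate $\tan^{-1}u \ge \frac{u}{1+u^2}$ for $u\ge 0$; this follows from a one-line derivative check, the derivative equalling $\frac{2u^2}{(1+u^2)^2} \ge 0$, with strict inequality for $u>0$. When $D\le 0$, the arctangent subtraction formula acquires a branch shift of $\pi$, forcing $\Theta \ge \pi/2$, while $\frac{2bD}{N} \le 0$; hence $B \ge \pi/2 > 0$ trivially. Combining the two regimes yields $\partial_a G \ge 0$ on $[0,\infty)$, so $G(a,b)\ge G(0,b)$ as required.

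For the equality clause, I would trace strictness through the argument: the factor $B(a,b)$ is strictly positive unless $b=0$, and $2a\,B(a,b)$ vanishes additionally at $a=0$. Hence equality in the original inequality forces $a=0$ or $b=0$, equivalently $k_1 = -k_2$ or $k_1 = k_2$, i.e.\ $k_1 = \pm k_2$. The only technical nuisance I anticipate is book-keeping the branch of the arctangent subtraction formula when $1+k_1k_2 < 0$; this is absorbed harmlessly because the desired estimate degenerates into the trivial bound $\Theta \ge \pi/2$ in that regime, so no delicate analysis is needed there.
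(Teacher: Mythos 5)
Your proof is correct, and it follows a genuinely different route from the paper's. Both arguments reduce to showing that $G(a,b)=(1+a^2-b^2)\bigl(\tan^{-1}(a+b)-\tan^{-1}(a-b)\bigr)$ is non-decreasing in $a\ge0$ (the paper writes the same quantity as $\partial_s f$ with $s=a$, $t=b$), but they establish this differently. The paper observes that $\partial_a G$ vanishes on the line $b=0$ and then computes the mixed second partial $\partial_b\partial_a G=\frac{32\,a\,b^2(1+a^2+b^2)}{N^2}$, which is manifestly non-negative; the sign of $\partial_a G$ follows by integrating in $b$. You instead factor $\partial_a G=2a\,B(a,b)$ and exploit the algebraic identity $N=D^2+4b^2$ (with $D=1+a^2-b^2$) to collapse $B$ to the single-variable inequality $\tan^{-1}u\ge\frac{u}{1+u^2}$ in the regime $D>0$, disposing of $D\le0$ by the branch bound $\Theta\ge\pi/2$. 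Your route avoids the second derivative and replaces a somewhat miraculous closed-form computation with a structural reduction; the cost is the case analysis on the sign of $D$ and the need to track the arctangent branch, neither of which arises in the paper's two-layer monotonicity argument. The equality analysis matches: $B>0$ whenever $b>0$, so $\partial_a G>0$ for $a,b>0$, forcing $a=0$ or $b=0$, i.e.\ $k_1=\pm k_2$.
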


\begin{proof}
Let $k_{2}-k_{1}=2 t $ and $ k_{2}+k_{1} =2s $. It is equivalent to show that
$f (t, s)=2\left(t^{2}-1\right) \tan^{-1} t+\left(1+ s^{2}-t^{2} \right)\left[\tan^{-1}(s +t)-\tan^{-1}(s -t)\right] \ge 0$ for $t \ge 0$. By symmetry we can also assume $s \ge 0$.

This is an equality when $s =0$, so it suffices to show that $\frac{\partial f}{\partial s }\ge 0$.
We compute
\begin{align*}
\frac{\partial f}{\partial s }
=&2s \left(\tan^{-1}(s +t)- \tan^{-1}(s -t)\right)+\left(1+s^{2}-t^{2}\right)\left(\frac{1}{1+(t+s)^{2}}-\frac{1}{1+(t-s)^{2}}\right).
\end{align*}
As $\frac{\partial f}{\partial s }(0, s)=0$, it in turn suffices to show that $\frac{\partial }{\partial t}\frac{\partial f}{\partial s }\ge 0$. By direct computation,
\begin{align*}
\frac{\partial}{\partial t} \frac{\partial f}{\partial s }
=\frac{32 t^{2} s \left(1+t^{2}+s^{2}\right)}{\left(1+(t-s)^{2}\right)^{2} \left(1+(t+s)^{2}\right)^{2}}\ge0.
\end{align*}
Therefore the inequality holds.
From the above, the equality holds if and only if $s =0$ or $t=0$, which is equivalent to $k_1=\pm k_2$.
\end{proof}
\begin{theorem}\label{thm2}
With the same assumption and notation in Proposition \ref{prop 1}, we have
\begin{equation}\label{est}
4 \pi^{2} g(\Sigma)
\le2\left(2 \pi^{2}-|M|\right)+\int_{\Sigma}\left[\sqrt{2}|\stackrel{\circ}{A}|+\left(|\stackrel {\circ}{A}|^{2}-2\right) \tan^{-1}\left(\frac{|\stackrel {\circ}{A}|}{\sqrt{2}}\right)\right].
\end{equation}
If the equality holds, then $\Sigma$ is either umbilical or is a minimal surface. Moreover, the cut distance (from both sides of $\Sigma$) at each point of $\Sigma$ must equal to the focal length.
\end{theorem}

\begin{proof}
Let $k_1\le k_2$ be the principal curvatures, then we have $ |\stackrel {\circ}{A}|=\frac{k_2-k_1}{\sqrt{2}}$.
By Proposition \ref{prop 1}
\begin{align*}
4 \pi^{2} g(\Sigma)\le\int_\Sigma \left(k_2-k_1-(1+k_1k_2)(\tan^{-1}k_2-\tan^{-1}k_1)\right).
\end{align*}
So by Lemma \ref{lem3},
\begin{align*}
4 \pi^{2} g(\Sigma)
\le& \int_\Sigma (k_2-k_1)+2\int_{\Sigma}\left(-1+\left(\frac{k_2-k_1}{2} \right)^2\right) \tan^{-1} \left(\frac{k_2-k_1}{2}\right)\\
=&\int_{\Sigma} \left[\sqrt{2}|\stackrel {\circ}{A}|+\left(|\stackrel {\circ}{A}|^{2}-2\right) \tan^{-1}\left(\frac{ | \stackrel{\circ}{A}|}{\sqrt{2}}\right)\right].
\end{align*}

We remark that $\sqrt{2}|\stackrel {\circ}{A}|+\left(|\stackrel {\circ}{A}|^{2}-2\right) \tan^{-1}\left(\frac{ | \stackrel{\circ}{A}|}{\sqrt{2}}\right)$ is strictly increasing in $|\stackrel {\circ}{A}|$. Indeed, for $f(t)=
\sqrt{2}t+\left(t^{2}-2\right) \tan^{-1}\left(\frac{t}{\sqrt{2}}\right) $, $f'(t)=\frac{2 \sqrt{2} t^{2}}{2+t^{2}}+2 t \tan^{-1}\left(\frac{t}{\sqrt{2}}\right)>0$ for $t>0$. In particular, the integrand on the RHS is always non-negative.

We now assume the equality holds. By \cite[3.4.8]{HK}, all planes of $M$ containing a tangent vector to a geodesic segment emanating orthogonally from $\Sigma$ have sectional curvature $1$, up to its focal length. \eqref{sum} also shows that $R^M_{1221}=1$ and so the sectional curvature along $\Sigma$ must be constantly equal to $1$.
\end{proof}
In view of Theorem \ref{thm2}, in the following, we define the function
\begin{equation}\label{f}
f(t)=\sqrt{2} t+\left(t^{2}-2\right) \tan^{-1}\left(\frac{t}{\sqrt{2}}\right).
\end{equation}

\begin{remark}\label{rmk1}
The inequality in Theorem \ref{thm2} can also be written in a form such that the integrand is expressed as a power series of $|\stackrel \circ A|$:
\begin{align*}
4 \pi^{2} g(\Sigma)\le& 2\left(2 \pi^{2}-|M|\right)+\int_{\Sigma}f(|\stackrel{\circ}{A}|)\\
=&2\left(2 \pi^{2}-|M|\right)+\int_{\Sigma}\left[\sum_{l=1}^{\infty}(-1)^{l+1} \frac{8 l}{4 l^{2}-1}\left(\frac{|\stackrel {\circ}{A}|}{\sqrt{2}}\right)^{2 l+1}\right].
\end{align*}
It is not hard to check that $f$ is a strictly increasing convex function with $f(0)=0$.

Observe that the Taylor series above is alternating and starts from the third order term. Indeed we have the following lemma.
\begin{lemma}\label{lem4}
For $t\ge 0$, $f(t) \le \frac{2 \sqrt{2} t^{3}}{3}$ and the inequality is strict unless $t=0$.
\end{lemma}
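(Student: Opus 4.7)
The plan is to reduce the inequality to a positivity statement for a derivative by iterating the ``vanishing at $0$ plus monotonicity'' trick twice. Set
\[
\phi(t) := \frac{2\sqrt{2}\,t^{3}}{3} - f(t) = \frac{2\sqrt{2}\,t^{3}}{3} - \sqrt{2}\,t - (t^{2}-2)\tan^{-1}\!\bigl(t/\sqrt{2}\bigr).
\]
Then $\phi(0)=0$, so it suffices to show $\phi'(t)>0$ for $t>0$. Using the derivative of $f$ computed already in the proof of Theorem \ref{thm2}, namely $f'(t) = \frac{2\sqrt{2}\,t^{2}}{2+t^{2}} + 2t\tan^{-1}(t/\sqrt{2})$, a short calculation collapses the rational part to
\[
\phi'(t) = 2\sqrt{2}\,t^{2} - \frac{2\sqrt{2}\,t^{2}}{2+t^{2}} - 2t\tan^{-1}\!\bigl(t/\sqrt{2}\bigr) = 2t\left[\frac{\sqrt{2}\,t(1+t^{2})}{2+t^{2}} - \tan^{-1}\!\bigl(t/\sqrt{2}\bigr)\right].
\]

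Next I would apply the same trick to the bracket: let $\psi(t) := \frac{\sqrt{2}\,t(1+t^{2})}{2+t^{2}} - \tan^{-1}(t/\sqrt{2})$, so that $\psi(0)=0$ and the task becomes $\psi'(t)>0$ for $t>0$. Differentiating the rational term by the quotient rule and combining with $\frac{d}{dt}\tan^{-1}(t/\sqrt{2}) = \frac{\sqrt{2}}{2+t^{2}}$ produces a common-denominator expression whose numerator should simplify to $\sqrt{2}\,t^{2}(t^{2}+4)$, i.e.
\[
\psi'(t) = \frac{\sqrt{2}\,t^{2}(t^{2}+4)}{(2+t^{2})^{2}} \ge 0,
\]
with strict inequality for $t>0$. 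This closes the chain: $\psi'>0\Rightarrow \psi>0$ on $(0,\infty)\Rightarrow \phi'>0\Rightarrow \phi>0$ there, and equality forces $t=0$.

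The only obstacle is algebraic bookkeeping in the two derivative computations; there is no analytic subtlety since the reductions are exact. I also note the alternative route suggested by Remark \ref{rmk1}: the inequality is exactly the statement that the tail $\sum_{l\ge 2}(-1)^{l+1}\frac{8l}{4l^{2}-1}(t/\sqrt{2})^{2l+1}$ is nonpositive, which by the alternating series test would follow from the monotone decay of successive terms in absolute value; however this monotonicity requires $t/\sqrt{2}\le 1$, so the power-series argument only covers the range $t\le\sqrt{2}$. The calculus argument above is preferable as it handles all $t\ge 0$ uniformly.
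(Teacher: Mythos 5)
Your proof is correct, and it is essentially the same as the paper's: both set $\phi(t)=\frac{2\sqrt2 t^3}{3}-f(t)$, compute $\phi'(t)=\frac{2t}{2+t^2}\bigl(\sqrt2\,t(1+t^2)-(2+t^2)\tan^{-1}(t/\sqrt2)\bigr)$, and argue the bracket is positive; the paper finishes in one line by bounding $\tan^{-1}(t/\sqrt2)\le t/\sqrt2$, whereas you iterate the monotonicity trick once more and verify $\psi'(t)=\frac{\sqrt2\,t^2(t^2+4)}{(2+t^2)^2}>0$, which is a slightly longer but equally valid way to certify the same positivity (and your side remark correctly identifies why the power-series route fails for $t>\sqrt2$).
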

\begin{proof}
For $t>0$, we have
\begin{align*}
\frac{d}{dt}\left(\frac{2 \sqrt{2} t^{3}}{3}-f(t)\right)
=&\frac{2 t}{2+t^{2}}\left(\sqrt{2} t\left(1+t^{2}\right)-\left(2+t^{2}\right) \tan^{-1}\left(\frac{t}{\sqrt{2}} \right)\right)\\
\ge&\frac{2 t}{2+t^{2}}\left(\sqrt{2} t\left(1+t^{2}\right)-\left(2+t^{2}\right)\left(\frac{t}{\sqrt{2}} \right)\right)>0.
\end{align*}
\end{proof}

By Lemma \ref{lem4}, instead of the bound in Theorem \ref{thm2}, we can bound the genus by a weaker but simpler expression:
\begin{corollary}\label{cor1'}
With the same assumption and notation in Proposition \ref{prop 1}, we have
$$
2 \pi^{2} g(\Sigma)
\le 2 \pi^{2}-|M| +\frac{\sqrt{2}}{3}\int_{\Sigma}|\stackrel{\circ}{A}|^3.
$$

If the equality holds, then $\Sigma$ is umbilical.
\end{corollary}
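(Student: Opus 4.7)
The plan is to obtain this statement as a direct consequence of Theorem \ref{thm2} combined with the pointwise estimate in Lemma \ref{lem4}. Since both ingredients are already in place, the argument should reduce to a one-line substitution, with only the equality case requiring a brief additional remark.

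Concretely, I would start from the main inequality of Theorem \ref{thm2},
$$4\pi^{2} g(\Sigma) \le 2(2\pi^{2} - |M|) + \int_{\Sigma} f(|\stackrel{\circ}{A}|),$$
and apply Lemma \ref{lem4} pointwise with $t = |\stackrel{\circ}{A}|(p)$ at every $p \in \Sigma$ to replace the integrand by the cubic upper bound $\frac{2\sqrt{2}}{3} |\stackrel{\circ}{A}|^{3}$. Integrating over $\Sigma$ and dividing by $2$ produces exactly the claimed estimate
$$2 \pi^{2} g(\Sigma) \le 2 \pi^{2} - |M| + \frac{\sqrt{2}}{3}\int_{\Sigma}|\stackrel{\circ}{A}|^{3}.$$

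For the equality case, assume equality is achieved in the statement. Chaining the two inequalities above, equality must then also hold in the application of Lemma \ref{lem4} after integration, i.e. $\int_{\Sigma}\bigl(\tfrac{2\sqrt{2}}{3}|\stackrel{\circ}{A}|^{3} - f(|\stackrel{\circ}{A}|)\bigr) = 0$. Since Lemma \ref{lem4} shows the integrand is pointwise non-negative and vanishes only when $|\stackrel{\circ}{A}| = 0$, continuity of $\stackrel{\circ}{A}$ forces $\stackrel{\circ}{A} \equiv 0$ on $\Sigma$, which is precisely the umbilical condition. I do not anticipate any real obstacle here: the only thing to verify carefully is that the strict inequality statement at the end of Lemma \ref{lem4} is indeed strong enough to conclude pointwise vanishing from integrated equality, but this is immediate from smoothness of $|\stackrel{\circ}{A}|$.
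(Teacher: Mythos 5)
Your proof is correct and follows exactly the route the paper intends: the corollary is presented immediately after Lemma \ref{lem4} with the remark that it follows from combining that lemma with Theorem \ref{thm2}, which is precisely your chain of inequalities. The equality analysis is also the expected one---integrated equality plus pointwise non-negativity and continuity of the difference forces $|\stackrel{\circ}{A}| \equiv 0$.
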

\end{remark}
\begin{corollary}\label{cor1}
For any closed surface $\Sigma$ in $\mathbb S^{3}$, we have
\begin{enumerate}
\item \label{point 1}
$$
4 \pi^{2} g(\Sigma) \le \int_{\Sigma}\left[\sqrt{2}|\stackrel{\circ}{A}|+\left(|\stackrel{\circ}{A}|^{2}-2\right) \tan^{-1}\left(\frac{|\stackrel{\circ}{A}|}{\sqrt{2}}\right)\right].
$$
The equality holds if and only if $\Sigma$ is a geodesic sphere or is a Clifford torus.
\item
$$
2 \pi^{2} g(\Sigma) \le \frac{ \sqrt{2}}{3} \int_{\Sigma}|\stackrel{\circ}{A}|^{3}.
$$
The equality holds if and only if $\Sigma$ is a geodesic sphere.
\end{enumerate}
\end{corollary}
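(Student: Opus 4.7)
The plan is to specialize the bounds from Section~2 to the ambient space $M=\mathbb{S}^3$. Since $|\mathbb{S}^3|=2\pi^2$, the terms $2(2\pi^2-|M|)$ appearing in Theorem~\ref{thm2} and $2\pi^2-|M|$ in Corollary~\ref{cor1'} both vanish, and the two displayed inequalities follow immediately. The substance of the corollary lies in the equality characterizations.

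For (1), the equality statement in Theorem~\ref{thm2} already narrows things down: $\Sigma$ must be either umbilical or minimal, and additionally the cut distance of $\Sigma$ from each side equals the focal length. The umbilical alternative in $\mathbb{S}^3$ forces $\Sigma$ to be a geodesic sphere by the classical classification of totally umbilical surfaces in space forms. In the minimal alternative, $k_1=-k_2$ so the focal length equals $\cot^{-1}(k_2)$ on both sides, and the saturation of the Heintze-Karcher volume estimate on each of the two components $\Omega_1,\Omega_2$ supplies strong rigidity along the normal exponential map (via \cite[3.4.8]{HK}), in particular forcing the ambient sectional curvatures along normal directions to equal $1$ and the principal curvatures to be constant along each normal geodesic. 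Pushing this through, I would derive that $|A|^2$ is constant on $\Sigma$, and then invoke the Chern-do Carmo-Kobayashi classification of closed minimal surfaces in $\mathbb{S}^3$ with constant $|A|^2$ to identify $\Sigma$ as a Clifford torus (the equator being umbilical, already accounted for). The converse is a direct check: for the Clifford torus $|\mathring A|^2\equiv 2$, $g=1$, $|\Sigma|=2\pi^2$, and both sides equal $4\pi^2$; and for a geodesic sphere both sides vanish.

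Part (2) is cleaner. By Lemma~\ref{lem4}, the integrand $f(|\mathring A|)$ is pointwise dominated by $\tfrac{2\sqrt 2}{3}|\mathring A|^3$, with strict inequality wherever $|\mathring A|>0$. Hence equality in (2) forces $|\mathring A|\equiv 0$, so $\Sigma$ is totally umbilical and therefore a geodesic sphere in $\mathbb{S}^3$; and conversely both sides vanish on a geodesic sphere.

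The main obstacle is the equality analysis in the minimal case of (1): upgrading the local saturation of the Heintze-Karcher estimate to the global conclusion that $\Sigma$ is a Clifford torus. I expect this to go through the tubular-neighborhood structure---the equality condition makes the normal exponential maps from both sides diffeomorphisms onto their full focal distance and covers $\mathbb{S}^3$ exactly, which should pin down $|A|^2$ as constant---after which the standard rigidity theorem for closed minimal surfaces in $\mathbb{S}^3$ with constant second fundamental form finishes the job.
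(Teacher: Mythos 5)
Your derivation of the two inequalities (the $|M|$-terms vanishing since $|\mathbb{S}^3|=2\pi^2$) and your treatment of the equality case in (2) via Lemma~\ref{lem4} both match the paper exactly. The umbilical alternative in (1) is also handled the same way.

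The gap is in the minimal alternative of (1). You state that the saturation of the Heintze--Karcher estimate, together with \cite[3.4.8]{HK}, should ``pin down $|A|^2$ as constant,'' after which you would invoke a classification of closed minimal surfaces of constant $|A|^2$ in $\mathbb{S}^3$. Neither step is substantiated. The HK equality condition tells you the cut distance equals the focal length $\cot^{-1}(k_2)$ at each point and that the ambient sectional curvatures equal $1$ along the normal geodesics up to focal distance, but $k_2$ (hence the focal length) can a priori vary from point to point; it is not at all clear how to extract constancy of $|A|$ from this without further argument. Moreover, even granting constant $|A|^2$, the Chern--do Carmo--Kobayashi theorem is a pinching result ($|A|^2 \le n/(2-1/p)$), not a classification of surfaces with arbitrary constant $|A|^2$; you would need Lawson's constant-curvature classification instead. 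You also need to separately rule out the case of a minimal $\Sigma$ that has an isolated totally geodesic point ($k_1=k_2=0$ somewhere, $A\not\equiv 0$), which is a priori compatible with the pointwise dichotomy $k_1 = \pm k_2$ from Lemma~\ref{lem3}. The paper avoids all of this: when $\Sigma$ is minimal, it first uses Kasue's rigidity theorem \cite[Theorem A(2)]{kasue1983Ricci} to show that if $k_1=k_2=0$ at some point (so the focal/cut distance is $\pi/2$ there) then $\Omega_1$ is a hemisphere and $\Sigma$ is the equator; otherwise $k_2>0$ everywhere, so by Lawson's Proposition 1.5 the surface is a torus, and then Brendle's resolution of the Lawson conjecture forces $\Sigma$ to be a Clifford torus. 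This chain requires no constancy of $|A|^2$, which is precisely what your route lacks.
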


\begin{proof}
By Theorem \ref{thm2} and Corollary \ref{cor1'}, we only have to consider the equality case. By Lemma \ref{lem4}, it suffices to prove \eqref{point 1}.

By Lemma \ref{lem3}, at each point in $\Sigma$, we have either $k_1=k_2$ or $k_1+k_2=0$. Suppose there is an umbilical point $p$ such that the principal curvatures $k_i\ne 0$. Then by continuity, $k_1=k_2$ in a neighborhood of $p$. By the usual proof of the Schur's lemma by using the Codazzi equation, we conclude that on this neighborhood, $k_i=\textrm{constant}\ne 0$. By a standard connectedness argument we then conclude that $\Sigma$ has constant principal curvatures and is a geodesic sphere. The remaining possibility is of course $k_1+k_2\equiv 0$, i.e. $\Sigma$ is a minimal surface.

Suppose now $\Sigma$ is a minimal surface. From the proof of Proposition \ref{prop 1}, the cut distance (from both sides of $\Sigma$) at each point must also equal to the focal length $\cot^{-1}k_2$. Assume there exists a point such that $k_1=k_2=0$, then the focal point of $\Sigma$ inside $\Omega_1$ is also a cut point, and so its distance from $\Sigma$ is $\cot^{-1}(0)=\frac{\pi}{2}$. By \cite[Theorem A(2)]{kasue1983Ricci}, $\Omega_1$ is the hemisphere and so $\Sigma$ is the equator. Therefore, if $\Sigma$ is not the equator, then $k_2>0$ on $\Sigma$. By \cite[Proposition 1.5]{Lawson1970complete}, $\Sigma$ must be a torus. By Brendle's proof of the Lawson's conjecture \cite{brendle2013embedded}, $\Sigma$ is a Clifford torus.
\end{proof}

For any surface, we have $|\stackrel {\circ}{A}|^{2}=|A|^{2}-2 H^{2}$, where $H=\frac{1}{2}(k_1+k_2)$ is the normalized mean curvature.
In particular, $|\stackrel {\circ}{A}|^{2}=|A|^{2}$ on a minimal surface.

\begin{corollary}
For any closed embedded minimal surface $\Sigma$ in $\mathbb S^{3}$, we have
\begin{equation*}\label{min surf}
4 \pi^{2} g(\Sigma)\le\int_{\Sigma}\left[\sqrt{2}| A |+\left(| A |^{2}-2\right) \tan^{-1}\left(\frac{| A |}{\sqrt{2}}\right)\right].
\end{equation*}
The equality holds if and only if $\Sigma$ is an equator or is a Clifford torus.
\end{corollary}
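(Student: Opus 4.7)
The plan is to deduce this statement as an immediate specialization of Corollary~\ref{cor1}\eqref{point 1} to the minimal setting. The key observation, recorded in the paragraph just before the statement, is the general identity $|\stackrel{\circ}{A}|^2 = |A|^2 - 2H^2$, where $H = \frac{1}{2}(k_1+k_2)$. Since $\Sigma$ is minimal, $H \equiv 0$, and hence $|\stackrel{\circ}{A}| = |A|$ pointwise on $\Sigma$. Substituting this pointwise identity into the integrand on the right-hand side of Corollary~\ref{cor1}\eqref{point 1} yields the asserted inequality with $|\stackrel{\circ}{A}|$ replaced by $|A|$.

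For the equality case, I would argue as follows. By the characterization in Corollary~\ref{cor1}\eqref{point 1}, equality forces $\Sigma$ to be either a geodesic sphere or a Clifford torus in $\mathbb{S}^3$. The Clifford torus is a minimal surface, so it remains a valid equality case. On the other hand, a geodesic sphere in $\mathbb{S}^3$ of geodesic radius $r$ has a constant principal curvature equal to $\cot r$, so it is minimal precisely when $r = \pi/2$, in which case $\Sigma$ is a totally geodesic great $2$-sphere, i.e.\ an equator. Intersecting the equality cases from Corollary~\ref{cor1}\eqref{point 1} with the minimality hypothesis therefore gives exactly the equators and the Clifford tori.

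Since the proof is purely a specialization, I do not anticipate any genuine obstacle; the only care required is in the equality characterization, namely in identifying that the only minimal geodesic spheres in $\mathbb{S}^3$ are equators, which is immediate from the formula $k_1 = k_2 = \cot r$.
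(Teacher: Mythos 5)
Your proposal is correct and follows exactly the paper's intended argument: the paper states the identity $|\stackrel{\circ}{A}|^2 = |A|^2 - 2H^2$ immediately before this corollary precisely so that the minimal case is a direct substitution into Corollary~\ref{cor1}\eqref{point 1}, and the equality characterization is obtained by intersecting the geodesic sphere/Clifford torus alternative with minimality, identifying the minimal geodesic sphere as the equator. No gap.
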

This result combined with Lemma \ref{lem4} gives a gap theorem:
\begin{corollary}\label{gap}
A closed embedded minimal surface $\Sigma$ in $\mathbb S^{3}$ with $\int_\Sigma |A|^3<3\sqrt{2}\pi^2$ is an equator.
\end{corollary}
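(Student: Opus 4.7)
The plan is to chain together the preceding minimal-surface corollary with the pointwise comparison of Lemma~\ref{lem4}. Since $\Sigma$ is a closed embedded minimal surface in $\mathbb{S}^{3}$, the previous corollary gives
$$4\pi^{2} g(\Sigma) \le \int_{\Sigma} f(|A|).$$
Applying Lemma~\ref{lem4} (with $t=|A|\ge 0$) pointwise under the integral bounds the right-hand side by $\tfrac{2\sqrt{2}}{3}\int_{\Sigma} |A|^{3}$, and invoking the hypothesis $\int_{\Sigma}|A|^{3}<3\sqrt{2}\pi^{2}$ then yields
$$4\pi^{2} g(\Sigma) \;\le\; \tfrac{2\sqrt{2}}{3}\int_{\Sigma}|A|^{3} \;<\; \tfrac{2\sqrt{2}}{3}\cdot 3\sqrt{2}\pi^{2} \;=\; 4\pi^{2}.$$
Dividing by $4\pi^{2}$ forces $g(\Sigma)<1$, i.e.\ $g(\Sigma)=0$, so $\Sigma$ is topologically a $2$-sphere.

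It remains to upgrade this topological conclusion to the geometric statement that $\Sigma$ is an equator. For this I would appeal to the classical theorem of Almgren: any branched minimal immersion of $S^{2}$ into $\mathbb{S}^{3}$ is a totally geodesic great sphere. The underlying reason is that the Hopf differential of a minimal surface in a space form is a holomorphic quadratic differential, which must vanish on a genus zero surface; hence $\Sigma$ is totally umbilic, and being minimal as well, totally geodesic, so it coincides with an equator.

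The only step that is not a one-line substitution is this last analytic-to-geometric upgrade, since the integral/topological inequalities themselves cannot distinguish a topological $S^{2}$ from the equator. That subtlety is handled by importing Almgren's rigidity for minimal $2$-spheres from the literature, after which the argument is complete.
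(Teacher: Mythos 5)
Your proof is correct and matches the route the paper intends: the paper simply remarks that the preceding minimal-surface corollary ``combined with Lemma~\ref{lem4} gives a gap theorem,'' which is precisely your chain $4\pi^2 g(\Sigma) \le \int_\Sigma f(|A|) \le \tfrac{2\sqrt{2}}{3}\int_\Sigma |A|^3 < 4\pi^2$, forcing $g=0$. The paper leaves implicit the upgrade from ``minimal $2$-sphere in $\mathbb{S}^3$'' to ``equator,'' which you correctly supply via Almgren's rigidity theorem (equivalently, the vanishing of the holomorphic Hopf differential on a genus-zero surface), so your write-up is essentially the same argument with that standard final step made explicit.
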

\begin{corollary} \label{cor2}
If $\Sigma$ is a closed embedded surface in $\mathbb S^3$ such that $\displaystyle |\stackrel {\circ}{A}|<\sqrt{2} \beta $, where $\beta$ is the unique solution to
\begin{align*}
\beta+(\beta^2-1)\tan^{-1}\beta =
\frac{2 g_0\pi^{2}}{|\Sigma|}
\end{align*}
for some $ g_0\in\mathbb N$, then the genus of $\Sigma$ is less than $g_0$.
In particular, if
$\displaystyle \beta+(\beta^2-1)\tan^{-1}\beta = \frac{2\pi^2}{|\Sigma|}$ and $\displaystyle |\stackrel{\circ}{A}|<\sqrt{2} \beta $,
then $\Sigma$ is homeomorphic to a sphere.
\end{corollary}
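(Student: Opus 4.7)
The plan is to apply Corollary~\ref{cor1}(\ref{point 1}) directly, using the strict monotonicity of the function $f(t)=\sqrt{2}t+(t^{2}-2)\tan^{-1}(t/\sqrt{2})$ that was already noted in the proof of Theorem~\ref{thm2}. The key algebraic observation is that the quantity $\beta+(\beta^{2}-1)\tan^{-1}\beta$ appearing in the hypothesis is, up to a factor of $2$, the same as $f(\sqrt{2}\beta)$. Indeed, a short substitution gives
\begin{equation*}
f(\sqrt{2}\beta)=\sqrt{2}\cdot\sqrt{2}\beta+(2\beta^{2}-2)\tan^{-1}\beta=2\bigl[\beta+(\beta^{2}-1)\tan^{-1}\beta\bigr],
\end{equation*}
so the defining relation for $\beta$ becomes $f(\sqrt{2}\beta)=4g_{0}\pi^{2}/|\Sigma|$.

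Next I would address the well-posedness of $\beta$: the map $\beta\mapsto \beta+(\beta^{2}-1)\tan^{-1}\beta$ is (up to the factor $1/2$) the composition $\beta\mapsto f(\sqrt{2}\beta)$, hence strictly increasing from $0$ to $+\infty$ on $[0,\infty)$, so $\beta$ exists and is uniquely determined by the positive number $2g_{0}\pi^{2}/|\Sigma|$ (for any $g_{0}\in\mathbb{N}$ this is automatic).

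With these pieces in place the main estimate is immediate. Since $f$ is strictly increasing and $|\stackrel{\circ}{A}|<\sqrt{2}\beta$ pointwise on $\Sigma$, we obtain $f(|\stackrel{\circ}{A}|)<f(\sqrt{2}\beta)=4g_{0}\pi^{2}/|\Sigma|$ pointwise, and integrating over $\Sigma$ gives $\int_{\Sigma}f(|\stackrel{\circ}{A}|)<4g_{0}\pi^{2}$. Combining this with Corollary~\ref{cor1}(\ref{point 1}) yields
\begin{equation*}
4\pi^{2}g(\Sigma)\le \int_{\Sigma}f(|\stackrel{\circ}{A}|)<4g_{0}\pi^{2},
\end{equation*}
so $g(\Sigma)<g_{0}$. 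The "in particular" statement follows by taking $g_{0}=1$: the conclusion $g(\Sigma)<1$ forces $g(\Sigma)=0$, i.e.\ $\Sigma$ is homeomorphic to a sphere.

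There is no genuine obstacle here; the statement is essentially a repackaging of Corollary~\ref{cor1}(\ref{point 1}) once one recognizes the identity $f(\sqrt{2}\beta)=2[\beta+(\beta^{2}-1)\tan^{-1}\beta]$. The only point requiring any care is the strictness of the final inequality, which uses the strict monotonicity of $f$ together with the strict inequality in the hypothesis on $|\stackrel{\circ}{A}|$.
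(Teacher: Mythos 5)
Your argument is correct and is the natural one: it is exactly the intended deduction from Corollary~\ref{cor1}(\ref{point 1}), which the paper leaves to the reader. The key identity $f(\sqrt{2}\beta)=2\bigl[\beta+(\beta^{2}-1)\tan^{-1}\beta\bigr]$ is verified correctly, the well-posedness of $\beta$ via the strict monotonicity of $f$ is appropriate, and the strict pointwise bound together with compactness of $\Sigma$ gives the strict integral inequality needed to conclude $g(\Sigma)<g_{0}$.
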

\begin{remark}
The condition in Corollary \ref{cor2} cannot be weakened to $k_1-k_1\le 2\beta $. Indeed, for the Clifford torus $\Sigma$, we have $k_1=-1, k_2=1$ and $|\Sigma|=2\pi^2$. So in this case we have $2\beta=2$, $\chi(\Sigma)=0$ and $k_2-k_1=2$.
\end{remark}

Yang and Yau \cite{yang1980eigenvalues} proved the following eigenvalue estimate:
Let $\Sigma$ be a compact Riemannian surface of genus $g$, then for any metric on $\Sigma$,
\begin{equation}\label{yy}
\lambda_{1} (\Sigma){\mathrm{Area}\left(\Sigma\right)}\le 8 \pi(g+1)
\end{equation}
where $\lambda_1$ is the first Laplacian eigenvalue of $\Sigma$. It was observed by El Soufi and Ilias \cite{el1983volume} that the same proof gives the improved bound
\begin{equation}\label{improve}
\lambda_{1} (\Sigma){\mathrm{Area}\left(\Sigma\right)}\le 8 \pi \left\lfloor\frac{g+3}{2}\right\rfloor,
\end{equation}
where $\lfloor\cdot\rfloor$ is the floor function. See also \cite{karpukhin2021improved} for a further improvement.

Using \eqref{yy} and Theorem \ref{thm2}, we have
\begin{corollary}
With the same assumption and notation in Proposition \ref{prop 1}, we have
$$
\lambda_{1}(\Sigma) \mathrm{Area}(\Sigma) \le 16 \pi-\frac{4|M|}{\pi}+\frac{2}{\pi} \int_{\Sigma} f(|\stackrel{\circ}{A}|)
$$
where $f(t)=\sqrt{2} t+\left(t^{2}-2\right) \tan^{-1}\left(\frac{t}{\sqrt{2}}\right)$.
\end{corollary}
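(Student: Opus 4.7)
The plan is to combine two results already in place: the Yang--Yau estimate \eqref{yy}, applied with the induced metric on $\Sigma$, together with the genus bound of Theorem \ref{thm2}. First I would invoke \eqref{yy} to obtain
\[
\lambda_1(\Sigma)\,\mathrm{Area}(\Sigma)\le 8\pi\bigl(g(\Sigma)+1\bigr).
\]
Next I would rearrange Theorem \ref{thm2} into the explicit form
\[
g(\Sigma)\le 1-\frac{|M|}{2\pi^2}+\frac{1}{4\pi^2}\int_\Sigma f(|\stackrel{\circ}{A}|),
\]
so that $g(\Sigma)+1 \le 2-\frac{|M|}{2\pi^2}+\frac{1}{4\pi^2}\int_\Sigma f(|\stackrel{\circ}{A}|)$.

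Multiplying through by $8\pi$ and simplifying the coefficients $8\pi\cdot 2 = 16\pi$, $8\pi\cdot\tfrac{1}{2\pi^2}= \tfrac{4}{\pi}$, and $8\pi\cdot\tfrac{1}{4\pi^2}= \tfrac{2}{\pi}$ would produce exactly the claimed inequality. Since both Theorem \ref{thm2} and \eqref{yy} are already established, the proof reduces to this algebraic manipulation and I do not anticipate any genuine obstacle; the real work was done in Proposition \ref{prop 1} and Lemma \ref{lem3}. I would also note in passing that using the El Soufi--Ilias refinement \eqref{improve} in place of \eqref{yy} would give a somewhat sharper statement with $\lfloor(g+3)/2\rfloor$ replacing $g+1$, obtained by exactly the same substitution.
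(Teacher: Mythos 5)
Your proof is correct and is exactly the route the paper takes: the corollary is introduced with ``Using \eqref{yy} and Theorem \ref{thm2}, we have,'' and your substitution of the genus bound from Theorem \ref{thm2} into the Yang--Yau inequality, followed by the coefficient bookkeeping $8\pi\cdot 2=16\pi$, $8\pi/(2\pi^2)=4/\pi$, $8\pi/(4\pi^2)=2/\pi$, reproduces the stated inequality precisely. Your closing remark about the El Soufi--Ilias refinement \eqref{improve} is also apt, and the paper in fact exploits exactly that refinement a little later in the proof of the lower bound for $\max_\Sigma|A|$.
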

When $M$ is the $3$-sphere, we have the following
\begin{corollary}
For a closed embedded surface in $\mathbb{S}^{3}$, we have
$$
\lambda_{1}(\Sigma) \mathrm{Area}(\Sigma) \le 8 \pi+\frac{2}{\pi} \int_{\Sigma} f(|\stackrel{\circ}{A}|).
$$
The equality holds if and only if $\Sigma$ is a geodesic sphere or is a Clifford torus.
\end{corollary}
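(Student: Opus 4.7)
The plan is to chain the two ingredients the author has just assembled; the argument is essentially a one-line substitution. First I would apply the Yang--Yau estimate \eqref{yy} to $\Sigma$ equipped with its induced metric from $\mathbb{S}^3$, obtaining
\[
\lambda_1(\Sigma)\,\mathrm{Area}(\Sigma) \le 8\pi\bigl(g(\Sigma)+1\bigr).
\]
Next I would invoke Corollary \ref{cor1}(1), which rearranges to $g(\Sigma) \le \frac{1}{4\pi^2}\int_\Sigma f(|\stackrel{\circ}{A}|)$. Substituting into the Yang--Yau bound and simplifying the prefactor $8\pi/(4\pi^2)=2/\pi$ yields
\[
\lambda_1(\Sigma)\,\mathrm{Area}(\Sigma) \le 8\pi + \frac{2}{\pi}\int_\Sigma f(|\stackrel{\circ}{A}|),
\]
which is exactly the claimed inequality.

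For the equality clause, I would argue that equality in the combined bound forces equality in both intermediate steps. The equality case of Corollary \ref{cor1}(1) is already characterized and pins $\Sigma$ down to a geodesic sphere or a Clifford torus. For a geodesic sphere of radius $r$ in $\mathbb{S}^3$, the induced metric is a round $2$-sphere of radius $\sin r$, so $\lambda_1\,\mathrm{Area} = (2/\sin^2 r)(4\pi\sin^2 r) = 8\pi = 8\pi(g+1)$, saturating Yang--Yau and hence the combined bound. A parallel computation on the flat Clifford torus $\mathbb{S}^1(1/\sqrt{2})\times\mathbb{S}^1(1/\sqrt{2})$ would complete the stated characterization.

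Since the entire derivation is a one-line substitution, I do not anticipate any real obstacle. The only place requiring genuine computation rather than quotation is the verification of Yang--Yau's saturation at each of the two surfaces flagged by Corollary \ref{cor1}(1), both of which reduce to short explicit eigenvalue/area calculations on a round $2$-sphere and a flat square torus, respectively.
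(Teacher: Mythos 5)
Your derivation of the inequality is correct and matches the paper's implicit approach: apply Yang--Yau \eqref{yy} to bound $\lambda_1\,\mathrm{Area}(\Sigma)$ by $8\pi(g+1)$, then use Corollary \ref{cor1}(1) to bound $g$ by $\frac{1}{4\pi^2}\int_\Sigma f(|\stackrel{\circ}{A}|)$, and combine; the prefactor arithmetic $8\pi/(4\pi^2)=2/\pi$ is right. (The paper actually routes through the preceding corollary with a general $M$-term and then specializes to $|M|=2\pi^2$, but the substance is identical.)

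However, your treatment of the equality case has a genuine gap. You correctly note that equality forces saturation of both Yang--Yau and Corollary \ref{cor1}(1), and you verify that geodesic spheres saturate both. But you defer the Clifford torus to ``a parallel computation'' without doing it, and that computation in fact disproves the claim. For the Clifford torus $\mathbb{S}^1(1/\sqrt{2})\times\mathbb{S}^1(1/\sqrt{2})$ the induced metric is the flat square torus of side length $\sqrt{2}\pi$, so $\lambda_1=2$ and $\mathrm{Area}=2\pi^2$, hence $\lambda_1\,\mathrm{Area}=4\pi^2$. On the other side, $|\stackrel{\circ}{A}|\equiv\sqrt{2}$ and $f(\sqrt{2})=2$, so the right-hand side equals $8\pi+\frac{2}{\pi}\cdot 4\pi^2=16\pi$. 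Since $4\pi^2<16\pi$ (because $\pi<4$), the Clifford torus does \emph{not} achieve equality; the point is that Yang--Yau is never sharp in genus one (the optimal genus-one constant is $8\pi^2/\sqrt{3}$, realized by the equilateral flat torus, and is strictly below $16\pi$). So the ``if'' direction fails for the Clifford torus, which means the equality characterization as stated in the paper is itself in error: the correct statement is that equality holds if and only if $\Sigma$ is a geodesic sphere. You should not have asserted, without checking, that the deferred computation would complete the characterization.
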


The following theorem is due to Choi and Schoen.
\begin{theorem} \cite[Theorem 2]{choi1985space}
Assume $M$ is a closed 3-dimensional manifold with positive Ricci curvature. There exists a constant $C$ depending only on $M$ and an integer $g$ such that if $\Sigma$ is a closed embedded minimal surface of genus $g$ in $M$, then
$$
\max_{\Sigma}|A| \le C.
$$
\end{theorem}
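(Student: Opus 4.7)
The plan is a blow-up and rescaling argument by contradiction. Suppose no such $C$ exists; then there is a sequence $\Sigma_i$ of closed embedded minimal surfaces of genus $g$ in $M$ with $\lambda_i := \max_{\Sigma_i}|A_i| \to \infty$. Pick $p_i \in \Sigma_i$ realizing the maximum, and rescale the ambient metric to $g_i := \lambda_i^2 g_M$, so that in the new metric $|\tilde A_i| \le 1$ with equality at $p_i$, while the ambient sectional curvature decays as $O(\lambda_i^{-2})$. The point is to pass to a limit at the base points $p_i$.

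The next step is to establish two uniform \emph{a priori} bounds. A uniform area bound $\mathrm{Area}(\Sigma_i) \le C(g,M)$ comes from the positive Ricci hypothesis: by a Choi--Wang eigenvalue estimate, any embedded minimal surface in a $3$-manifold with $\mathrm{Ric}\ge k>0$ satisfies $\lambda_1(\Sigma_i)\ge k/2$, and combining this with Yang--Yau's inequality \eqref{yy} yields the area bound. The Gauss equation for a minimal surface, $|A|^2 = 2(\mathrm{Sec}_M(T\Sigma) - K_\Sigma)$, integrated over $\Sigma_i$ and paired with Gauss--Bonnet, converts this into a uniform bound $\int_{\Sigma_i}|A_i|^2 \le C(g,M)$. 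This quantity is scale invariant, so the same bound survives the rescaling.

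In the rescaled picture the metric is locally approaching the flat metric on $\mathbb R^3$, and the surfaces $\tilde\Sigma_i$ are minimal with $|\tilde A_i|\le 1$ pointwise. The bound on $|A|$ gives a definite radius over which $\tilde\Sigma_i$ is graphical over its tangent plane with small gradient; together with embeddedness this yields, via Arzel\`a--Ascoli on uniform graphical representations, smooth subsequential convergence on compact subsets of $\mathbb R^3$ to a complete embedded minimal surface $\Sigma_\infty \subset \mathbb R^3$ with $|A_\infty|(0)=1$ and $\int_{\Sigma_\infty}|A_\infty|^2 \le C$. In particular $\Sigma_\infty$ is non-flat and of finite total curvature.

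The hard part is extracting a contradiction from the existence of $\Sigma_\infty$. By Osserman's classification, a non-planar complete embedded minimal surface in $\mathbb R^3$ with finite total curvature (the catenoid being the model case) carries at least $8\pi$ of $\int|A|^2$; paired with an $\varepsilon$-regularity statement (small local $L^2$-energy of $A$ at scale $r$ implies a pointwise bound on $|A|$ at scale $r/2$), this restricts curvature concentration in $\Sigma_i$ to finitely many points. To close the contradiction one must rule out the catenoid-type blow-up: since $M$ is closed and $\Sigma_i$ is embedded with fixed genus, the two ends of a would-be catenoid neck cannot remain separated in $M$ without forcing either a loss of embeddedness or extra handles in $\Sigma_i$ incompatible with the fixed genus. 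Making this topological exclusion rigorous, and transferring it back to an $|A|$ bound at the original scale, is the delicate step.
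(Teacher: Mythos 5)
The paper cites this result as \cite[Theorem 2]{choi1985space} and does not prove it, so your sketch has to be measured against Choi--Schoen's original argument. Your a priori estimates are correctly identified and correctly derived: the Choi--Wang lower bound $\lambda_1\ge k/2$ for embedded minimal hypersurfaces in $\mathrm{Ric}\ge k>0$, combined with the Yang--Yau inequality, gives the area bound; the Gauss equation $|A|^2=2(\mathrm{Sec}_M-K_\Sigma)$ and Gauss--Bonnet convert this to a genus-dependent bound on the scale-invariant quantity $\int_\Sigma|A|^2$; and the blow-up at a point of maximal $|A|$ produces (modulo a local area bound in the rescaled picture that you do not quite justify) a complete non-flat embedded minimal surface $\Sigma_\infty\subset\mathbb R^3$ with finite total curvature and $|A_\infty|(0)=1$. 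The $\varepsilon$-regularity you invoke is Choi--Schoen's Proposition~1, and the quantization $\int|A_\infty|^2\ge 8\pi$ for non-planar $\Sigma_\infty$ is Osserman's. All of that is faithful to the source.

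The gap is exactly where you say it is, and it is not merely ``delicate''---it is the whole theorem. The existence of $\Sigma_\infty$ is not by itself contradictory: a catenoid is a legitimate blow-up limit. Your proposed topological exclusion (``the two ends of a catenoid neck would force extra handles or loss of embeddedness'') does not hold as stated: a catenoid has genus zero, a single neck need not change the genus of $\Sigma_i$, and nothing in the rescaled picture prevents the two planar ends from closing up far away in $\Sigma_i$ without creating topology. What Choi--Schoen actually do is prove their compactness Theorem~1 first---$\varepsilon$-regularity confines concentration to finitely many points, a removable-singularity theorem shows the varifold limit is a smooth embedded minimal surface, and a multiplicity-one argument (this is where positive Ricci enters essentially, e.g.\ via Frankel's theorem or the non-existence of two-sided stable closed minimal surfaces) upgrades the convergence to smooth convergence with multiplicity one; Theorem~2 is then a corollary because $\Sigma\mapsto\max_\Sigma|A|$ is continuous on a $C^2$-compact space. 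If you want to run a direct blow-up contradiction, you still need these same ingredients to rule out a catenoidal neck forming on a multiplicity-two sheet; the fixed genus alone will not do it.
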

It is interesting to compare it with the following result:
\begin{corollary}
Assume $M$ is a closed orientable 3-dimensional manifold whose sectional curvature is bounded below by $1$ and $\Sigma$ is a closed embedded orientable minimal surface of genus $g\ge 1$ in $M$, then we have
\begin{align*}
\max_\Sigma|A|\ge f^{-1}\left(\frac{2 \pi^{2}(g-1)+|M|}{4 \pi\left\lfloor\frac{g+3}{2}\right\rfloor}\right)
\end{align*}
where $f$ is given by \eqref{f}. In paricular, $\displaystyle \max_{\Sigma}|A| \ge C>0$, where $C$ depends on the genus $g$ and the volume of $M$ only.
\end{corollary}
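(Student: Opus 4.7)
The plan is to combine Theorem \ref{thm2} with the improved Yang--Yau-type bound \eqref{improve} of El~Soufi--Ilias and the Choi--Wang lower bound on $\lambda_1$ for embedded minimal hypersurfaces in a manifold of positive Ricci curvature. The eigenvalue inequalities together yield an a priori upper bound on $\mathrm{Area}(\Sigma)$, after which the integral estimate of Theorem \ref{thm2} can be turned into a pointwise lower bound on $|A|$ via the monotonicity of $f$.

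The execution proceeds in three steps. First, since $M$ has sectional curvature bounded below by $1$, its Ricci curvature is bounded below by $2$, so the Choi--Wang estimate \cite{choi1983first} gives $\lambda_1(\Sigma) \ge 1$. Combined with \eqref{improve} this produces
\begin{equation*}
\mathrm{Area}(\Sigma) \le \lambda_1(\Sigma)\, \mathrm{Area}(\Sigma) \le 8\pi \left\lfloor \frac{g+3}{2} \right\rfloor.
\end{equation*}
Second, since $\Sigma$ is minimal one has $|\stackrel{\circ}{A}| = |A|$, so Theorem \ref{thm2} yields
\begin{equation*}
\int_\Sigma f(|A|) \ge 4\pi^{2} g - 2(2\pi^{2} - |M|) = 4\pi^{2}(g-1) + 2|M|.
\end{equation*}
Third, by the strict monotonicity of $f$ on $[0,\infty)$ with $f(0)=0$ (verified in Remark \ref{rmk1}), one has $\int_\Sigma f(|A|) \le f(\max_\Sigma |A|)\, \mathrm{Area}(\Sigma)$. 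Dividing the previous inequality by the area bound from the first step gives
\begin{equation*}
f(\max_\Sigma |A|) \ge \frac{4\pi^{2}(g-1)+2|M|}{8\pi \lfloor (g+3)/2 \rfloor} = \frac{2\pi^{2}(g-1)+|M|}{4\pi \lfloor (g+3)/2 \rfloor},
\end{equation*}
and applying the increasing function $f^{-1} : [0,\infty) \to [0,\infty)$ yields the stated estimate. For the ``in particular'' claim, note that when $g \ge 1$ the numerator $2\pi^{2}(g-1)+|M|$ is strictly positive (using $|M| > 0$ because $M$ is closed), so the lower bound is a strictly positive constant depending only on $g$ and $|M|$.

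The only non-routine ingredient is the Choi--Wang inequality $\lambda_1(\Sigma) \ge \tfrac{1}{2}\inf \mathrm{Ric}_M$, which crucially uses the embeddedness of $\Sigma$ and is not available for merely immersed minimal surfaces. Once this bound is accepted, the remainder is a mechanical combination of Theorem \ref{thm2}, \eqref{improve}, and the monotonicity of $f$ already established in the paper.
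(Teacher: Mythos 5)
Your proposal is correct and follows essentially the same route as the paper: combine Theorem \ref{thm2} with the area bound $\mathrm{Area}(\Sigma)\le 8\pi\lfloor(g+3)/2\rfloor$ (obtained from the Choi--Wang estimate $\lambda_1\ge\tfrac12\mathrm{Ric}_M\ge 1$ together with the El Soufi--Ilias improvement of Yang--Yau), then use monotonicity of $f$ to extract the pointwise bound on $\max_\Sigma|A|$. The only difference is that you spell out the Choi--Wang eigenvalue step explicitly, whereas the paper simply cites the area estimate from \cite[Proposition 4]{choi1983first}.
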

\begin{proof}
For a minimal surface, $|A|=|\stackrel \circ A|$.
As remarked above, $f$ is strictly increasing, and so by Theorem \ref{thm2},
\begin{equation*}
\begin{split}
4 \pi^{2} g \le 2(2\pi^2-|M|)+\int_{\Sigma} f(|A|)\le 2\left(2 \pi^{2}-|M|\right)+ \mathrm{Area}(\Sigma) f(\max_\Sigma |A|).
\end{split}
\end{equation*}
By the proof of the area estimate in \cite[Proposition 4]{choi1983first} and the improvement \eqref{improve}, we have $\mathrm{Area}(\Sigma)\le 8 \pi\left\lfloor\frac{g+3}{2}\right\rfloor$. Therefore
\begin{align*}
\max_\Sigma|A|\ge f^{-1}\left(\frac{4\pi^2 g-2(2\pi^2-|M|)}{8\pi\left\lfloor\frac{g+3}{2}\right\rfloor}\right)
=f^{-1}\left(\frac{2\pi^{2} (g-1)+|M|}{4 \pi\left\lfloor\frac{g+3}{2}\right\rfloor}\right).
\end{align*}
In particular, if $M=\mathbb S^3$, then
$$
\max_{\Sigma}|A| \ge f^{-1}\left(\frac{\pi}{2}\cdot\frac{ g}{ \left\lfloor\frac{g+3}{2}\right\rfloor}\right).
$$
\end{proof}
\begin{corollary}\label{cpt}
Let $M$ be a closed orientable three-dimensional Riemannian manifold whose sectional curvature of $M$ is bounded from below by $1 $ and $C\ge 0$.
Then
\begin{enumerate}
\item
The space of closed orientable embedded minimal surfaces $\Sigma$ with $\int_\Sigma f(|A|)\le C$ is compact in the $C^k$ topology for any $k\ge2$.
\item
The space of closed orientable embedded minimal surfaces $\Sigma$ with $\int_\Sigma |A|^3 \le C$ is compact in the $C^k$ topology for any $k\ge2$.
\end{enumerate}
\end{corollary}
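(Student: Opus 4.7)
The plan is to reduce both parts to the Choi–Schoen compactness theorem, whose hypothesis is a bound on the genus of the minimal surfaces under consideration. Their result (Theorem 2 and Proposition 4 of \cite{choi1985space}, together with the observation used in the preceding corollary about the area estimate) says that in a closed 3-manifold of positive Ricci curvature, the space of closed embedded minimal surfaces of a fixed genus is compact in the $C^k$ topology for every $k\ge 2$. Since our hypothesis $\sec(M)\ge 1$ implies strictly positive Ricci curvature, this theorem is available to us; what we lack is an a priori bound on the genus.

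The bound on the genus is exactly what Theorem \ref{thm2} provides. For a minimal surface one has $|\stackrel\circ A| = |A|$, so
\begin{equation*}
4\pi^{2}g(\Sigma)\le 2\left(2\pi^{2}-|M|\right)+\int_{\Sigma} f(|A|)\le 2\left(2\pi^{2}-|M|\right)+C.
\end{equation*}
Since $|M|>0$ is a fixed quantity, this yields an upper bound $g(\Sigma)\le g_{0}=g_{0}(M,C)$. Thus any sequence $\{\Sigma_{i}\}$ of surfaces in the class of part (1) has uniformly bounded genus. Passing to a subsequence we may assume the genus is constant, and then the Choi–Schoen compactness result produces a subsequence converging in $C^k$ to a closed embedded minimal surface $\Sigma_{\infty}$. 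This gives part (1).

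Part (2) reduces to part (1) by Lemma \ref{lem4}: since $f(t)\le \tfrac{2\sqrt{2}}{3}t^{3}$ for all $t\ge 0$,
\begin{equation*}
\int_{\Sigma} f(|A|) \le \frac{2\sqrt{2}}{3}\int_{\Sigma}|A|^{3}\le \frac{2\sqrt{2}}{3}C,
\end{equation*}
so the $L^{3}$-class is contained in the class of part (1) with constant $\tfrac{2\sqrt{2}}{3}C$, and hence is compact.

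The only potential obstacle is a careful citation of Choi–Schoen in the form needed; in particular one must know that the compactness statement for a fixed genus is strong enough to produce $C^k$ convergence for every $k\ge 2$ and that a limit of embedded minimal surfaces of that genus is still an embedded minimal surface of that genus. Both of these are contained in \cite{choi1985space} (the curvature estimate together with standard elliptic regularity for the minimal surface equation), so beyond quoting that result the argument amounts to the genus bound recorded above.
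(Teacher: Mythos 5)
Your argument matches the paper's approach exactly: use Theorem \ref{thm2} (combined with Lemma \ref{lem4}, which is how Corollary \ref{cor1'} handles the $L^3$ case) to bound the genus a priori, then invoke the Choi--Schoen compactness theorem --- their Theorem 1, which is the actual compactness statement, rather than Theorem 2, which is only the curvature estimate. The one small point you leave implicit is that compactness, not merely precompactness, requires that the constraint $\int_\Sigma f(|A|)\le C$ (resp.\ $\int_\Sigma |A|^3\le C$) passes to the $C^k$ limit; this holds because these functionals are continuous under $C^2$ convergence, which is precisely what the paper records by noting the spaces ``are closed in the $C^k$ topology.''
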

\begin{proof}
By Theorem \ref{thm2} and Corollary \ref{cor1'}, both spaces are contained in the space of closed orientable embedded minimal surfaces with an upper bound of the genus and are closed in the $C^k$ topology, $k\ge2$. Therefore they are both compact by \cite[Theorem 1]{choi1985space}.
\end{proof}

\section{Appendix: Comparison with the classical estimate}\label{sec 4 appendix}
In this appendix, we present a comparison between our estimate and a more well-known estimate involving the $L^2$ norm of $ \stackrel \circ A $.

It is well-known that the genus of $\Sigma$ can also be bounded in terms of $\int_\Sigma |\stackrel \circ A|^2 $ when $M$ has sectional curvature at least $1$.
To see this, note that the Gaussian curvature $K_{\Sigma}$ of $\Sigma$ satisfies $K_{\Sigma} \ge 1+k_1 k_2$ by the Gauss equation and the assumption that the sectional curvature of $M$ is at least $1 $. As $-k_1 k_2 \le \frac{1}{2}|\stackrel \circ A|^2$, by integrating this inequality and using the Gauss-Bonnet formula, we have
\begin{equation}\label{est'}
\begin{split}
4 \pi g & =4 \pi-\int_{\Sigma} K_{\Sigma} \\
& \le 4 \pi-|\Sigma|+\frac{1}{2} \int_{\Sigma}|\stackrel \circ A|^2.
\end{split}
\end{equation}
It appears that this estimate is simpler compared to \eqref{est}, and the $L^2$ norm of $|\stackrel \circ A|$ in the estimate seems to be preferable to the $L^3$ norm of $|\stackrel \circ A|$ as seen in Corollary \ref{cor1'}.

However, it is important to note that this inequality does not imply our estimate \eqref{est}. In fact, we have verified this through numerous numerical examples, which indicate that \eqref{est} is better than \eqref{est'}. This observation demonstrates that (i) our estimate is novel and (ii) it is not possible to derive \eqref{est} from \eqref{est'}. Note also that we do not impose any assumption on the area in Theorem \ref{thm2}.

To compare the two estimates, we have computed examples using ``ellipsoids'' in $\mathbb S^3$. Upon examining the proof of \eqref{est'}, it is evident that it cannot be sharp for ``small'' surfaces that are nearly umbilical, which have genus $0$. (However, it can still be sharp after taking the floor on the right-hand side.)

To aid our computation, we observe that the surface integral $\int_{\Sigma}|\stackrel \circ A|^2 dS$ is conformally invariant. Specifically, the spherical metric can be expressed as $g_{\mathbb S^3}=\frac{4}{(1+|x|^2)^2}\overline g$ under the stereographical projection, where $\overline g$ represents the flat metric on $\mathbb R^3$. Therefore, we can calculate
\begin{equation*}
{A}_i^j=\frac{1+r^2}{2} \overline A_i^j-r \overline g\left(\partial_r, \nu\right) \delta_i^j,
\end{equation*}
where $r=|x|$ and $\overline A$ denotes the second fundamental form with respect to $\overline g$. From this, we see that $|\stackrel{\circ}{{A}}|= \frac{1+r^2}{2} |\stackrel{\circ}{\overline A}|$ and $\int_{\Sigma}|\stackrel \circ A|^2 dS_{g_{\mathbb S^3}}=
\int_{\Sigma}|\stackrel \circ {\overline A}|^2 dS_{\overline g} $. Additionally, note that $|\Sigma|_{g_{\mathbb S^3}}=\frac{4}{(1+r^2)^2}|\Sigma|_{\overline g}$, and so we can freely rescale $\Sigma$ such that $|\Sigma|_{g_{\mathbb S^3}}\to 0$ without affecting other terms in \eqref{est'}. However, we do not employ this observation in our numerical verification.

We now consider the family of ellipsoids $\{\frac{x^2}{a^2}+\frac{y^2}{b^2}+\frac{z^2}{c^2}=1\}$, where the genus $g=0$.
We compare the differences between the two estimates
$E_2=\lfloor 1-\frac{1}{4\pi}|\Sigma|_{g_{\mathbb S^3}}+\frac{1}{8\pi}\int_\Sigma |\stackrel \circ {\overline A}|^2 dS_{\overline g}\rfloor $ and
$E_f=\lfloor\frac{1}{4\pi^2}\int_\Sigma f\left(|\stackrel \circ { A}|\right) dS_{ g_{\mathbb S^3}} \rfloor$, where $\lfloor \cdot\rfloor$ is the floor function.

Using Mathematica\footnote{The computation file is available upon request. }, we have computed the differences $E_2-E_f$ for all $a, b, c$ ranging between ${1, \cdots, 10}$. Our experiment demonstrates that for all $a, b, c$ in this range, $E_2-E_f\ge 0$, and the differences can be as large as $16$ (occurs when $(a, b, c)=(1, 10, 10)$). Consequently, our estimate \eqref{est} is better than the classical estimate \eqref{est'}.

\end{document}